\newtheorem{theorem}{Theorem}[section]
\newtheorem{lemma}[theorem]{Lemma}
\theoremstyle{definition}
\newtheorem{definition}[theorem]{Definition}
\newcommand{\R}{\mathbb{R}}
\newcommand{\oU}{\overline U}
\newcommand{\uc}{\mathscr U}
\newcommand{\rowun}[2]{#1_1\cup\ldots\cup#1_#2}
\newcommand{\row}[2]{#1_1,\ldots,#1_#2}
\newcommand{\range}[1]{1,\ldots, r_#1}
\begin{document}

\title[Continuous functions one-to-one almost everywhere]{Existence of continuous functions\\ that are one-to-one almost everywhere}
\author{Alexander J. Izzo}
\address{Department of Mathematics and Statistics, Bowling Green State University, Bowling Green, OH 43403}
\email{aizzo@math.bgsu.edu}


\subjclass[2000]{Primary 54C30; Secondary 26E99, 46E30, 54E40}
\keywords{one-to-one almost everywhere function, $L^p$-space, generators for algebras}

\begin{abstract}
\vskip 24pt
It is shown that given a metric space $X$ and a $\sigma$-finite positive regular Borel measure $\mu$ on $X$, there exists a bounded continuous real-valued function on $X$ that is one-to-one on the complement of a set of $\mu$ measure zero.
\end{abstract}
\maketitle

\vskip -1.36 true in
\centerline{\footnotesize\it Dedicated to the memory of Mary Ellen Rudin} 
\vskip 1.36 truein

\section{Introduction}

In \cite{IL} the author and Bo Li studied the question of how many functions are needed to generate an algebra dense in various $L^p$-spaces.  In connection with this, they proved \cite[Theorem~1.10]{IL} that on \hbox{every} smooth manifold-with-boundary there exists a bounded continuous {real-valued} function that is one-to-one on the complement of a set of \hbox{measure} zero.  It was suggested by Lee Stout that this result would generalize to a metric space context.  In this paper we show that this is indeed the case.   The author would like to thank Stout for sharing his insight. We state the result using the following terminology introduced in \cite{IL}.

\begin{definition}
We call a map $F$ defined on a measure space $X$ {\em one-to-one almost everywhere\/} if there is a subset $E$ of $X$ of measure zero such that the restriction of $F$ to $X\setminus E$ is one-to-one.
\end{definition}

\begin{theorem}\label{oneone}
Let $X$ be a metric space and $\mu$ be a $\sigma$-finite positive regular Borel measure on X.  Then there exists a bounded continuous real-valued function on X that is one-to-one almost everywhere.
\end{theorem}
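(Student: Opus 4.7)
The plan is to prove the theorem in two steps: first reduce to a compact metric space with finite measure, then construct the desired function there via an inductive scale-by-scale address-encoding scheme.

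\emph{Reduction.} I would first use the $\sigma$-finiteness and inner regularity of $\mu$ to pick an increasing sequence of compact sets $K_1 \subseteq K_2 \subseteq \cdots$ in $X$ with $\mu(K_n) < \infty$ and $\mu(X \setminus \bigcup_n K_n) = 0$. Granted the compact case, for each $n$ I would produce a continuous $g_n : K_n \to [0,1]$ that is one-to-one off a null subset of $K_n$, extend each $g_n$ to a continuous $\tilde g_n : X \to [0,1]$ via Tietze's theorem, and combine them into a single bounded continuous $f$ on $X$ as a rapidly weighted sum. The key point is to arrange, by post-composing each $\tilde g_n$ with a suitable affine map, that the $n$-th term has its image on $K_n$ confined to a distinguished sub-interval $J_n \subseteq [0,1]$, with the $J_n$ pairwise disjoint; the disjointness together with rapid weight decay then ensures that the value of $f$ at a point of $\bigcup_n K_n$ recovers both which index $n$ first contains the point and the value of $g_n$ there, making $f$ one-to-one on $\bigcup_n K_n$ off a null set.

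\emph{Compact case.} For $K$ compact metric with finite regular Borel measure $\nu$, my plan is an inductive address-encoding construction. At each scale $n \ge 1$, I would cover $K$ by finitely many open balls of radii less than $1/n$ whose boundary spheres are $\nu$-null; this is possible because, for any fixed center, at most countably many radii give spheres of positive $\nu$-measure. These yield a Borel partition $\{A_{n,i}\}_{i=1}^{k_n}$ into cells of diameter less than $1/n$ with $\nu$-null boundaries, so that each point outside the countable union of all these boundaries (a $\nu$-null set) has a well-defined address sequence $(i_n(x))_n$ that determines $x$ uniquely. Using a continuous partition-of-unity approximation $\tilde f_n$ of the discontinuous step function $x \mapsto c_{n, i_n(x)}$ for suitably separated values $c_{n,i}$, I would then set $f = \sum_n a_n \tilde f_n$ with weights $a_n > 0$ decaying rapidly enough that the series converges uniformly and the level-$n$ contribution dominates all higher-level contributions.

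\emph{Main obstacle.} The hardest part will be making the compact-case construction quantitatively precise. For two ``good'' points $x \ne y$ outside all boundaries, letting $n^*$ be the smallest scale where $i_{n^*}(x) \ne i_{n^*}(y)$, the rapid weight decay should make the level-$n^*$ term dominate $f(x) - f(y)$; but the partition-of-unity smoothing near cell boundaries and the cumulative higher-level corrections must be carefully controlled so as not to cancel the $n^*$-level signal. The delicate interplay between the cell counts $k_n$, the decay rate of the $a_n$, and the separation of the $c_{n,i}$ is the crux. Verifying that the resulting exceptional set of points where these estimates fail is contained in the $\nu$-null union of all scale boundaries, and then lifting the result back to $X$ via the reduction step, will complete the proof.
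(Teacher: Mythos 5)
There is a genuine gap, and it sits exactly where you flagged ``the crux'': your plan contains no mechanism that actually prevents cancellation, and the exceptional set your scheme produces is not null. In the compact case, a continuous partition-of-unity approximation $\tilde f_n$ of the step function $x\mapsto c_{n,i_n(x)}$ must interpolate between the values $c_{n,i}$ on a \emph{neighborhood} of the cell boundaries; only the boundaries themselves are $\nu$-null, while the transition zones have positive measure (think of $K=[0,1]$ with Lebesgue measure: a continuous function cannot equal a nonconstant step function off a null set). So the set where ``the estimates fail'' is not contained in the union of the boundary spheres, and making the zones thin only yields a function one-to-one off a set of small positive measure, not off a null set. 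Moreover, even for two points avoiding all transition zones up to the first differing scale $n^*$, the dominance argument is fragile in your additive setup; and for points lying in a transition zone at some level $n\le n^*$, the term $a_n\bigl(\tilde f_n(x)-\tilde f_n(y)\bigr)$ carries the \emph{large} weight $a_n\gg a_{n^*}$ and is uncontrolled, so it can cancel the level-$n^*$ signal. The same defect undermines your reduction step: if $f=\sum_n a_n\tilde g_n$ with Tietze extensions $\tilde g_n$, then for $x\neq y$ in the same $K_m$ the quantity $a_m\bigl|g_m(x)-g_m(y)\bigr|$ can be arbitrarily small, while the terms with $n<m$ involve extension values that are completely uncontrolled off $K_n$; disjointness of the target intervals $J_n$ does not let you decode the summands from their sum, so injectivity of one summand does not give injectivity of the sum off a null set.

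The missing idea, which is how the paper proceeds, is to first discard a null set so that what remains is a countable disjoint union of Cantor sets together with an at most countable set (this uses inner regularity, the Cantor--Bendixson theorem, and a lemma extracting a Cantor set of nearly full measure from any compact metric space without isolated points). On a Cantor set your ``address map'' is honestly continuous and injective -- it is a homeomorphism onto a Cantor set in $[0,1]$ -- so there are no transition zones at all. The pieces are then combined not by a weighted sum but by an inductive scheme with \emph{freezing}: $f_{k+1}$ agrees exactly with $f_k$ on the pieces already handled, maps the new Cantor set onto small Cantor sets in $[0,1]$ chosen disjoint from the image of the old pieces, and satisfies $\|f_{k+1}-f_k\|_\infty\le 2^{-k}$; since values on handled pieces never change and new values avoid the old image, no cancellation can occur, and the uniform limit is continuous and one-to-one on the full-measure set. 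Some such structural step (restriction to totally disconnected pieces plus value-freezing, or an equivalent substitute) is needed; as written, your plan cannot be completed by ``carefully controlling'' the constants.
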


The boundedness of the function is not really important; given an unbounded function with the other properties, we can obtain a bounded one by post composing with a homeomorphism of $\R$ onto the interval $(-1,1)$.  The point of the theorem is that the function is continuous {\em everywhere\/} and one-to-one {\em almost everywhere\/}.
Note that the metric space $X$ can be of arbitrarily large cardinality, but the set of full measure on which the function is one-to-one can have cardinality at most that of the continuum.  Note also that the theorem becomes false if the $\sigma$-finiteness condition is dropped as is exemplified by the case of counting measure on a discrete space with cardinality greater than that of the continuum.

The result about continuous one-to-one almost everywhere functions in \cite{IL} was used there to show that on every Riemannian manifold-with-boundary $M$ of finite volume there exists a bounded continuous real-valued function $f$ such that the set of polynomials in $f$ is dense in $L^p(M)$ for all $1\leq p<\infty$ \cite[Theorem~1.2]{IL}.  The argument given there can now be repeated using Theorem~\ref{oneone} above in place of \cite[Theorem~1.10]{IL} to establish the following more general result.  This result also strengthens \cite[Theorem~1.1]{IL}.

\begin{theorem}
Let $X$ be a metric space and $\mu$ be a finite positive regular Borel measure on X.  
Then there exists a bounded continuous real-valued function $f$ on X such that the set of polynomials in $f$ is dense in $L^p(\mu)$ for all $1\leq p<\infty$.
\end{theorem}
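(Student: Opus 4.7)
The plan is to exploit the one-to-one almost everywhere function produced by Theorem~\ref{oneone} and to transfer polynomial approximation on a compact interval back to $L^p(\mu)$ on $X$, essentially replaying the argument of \cite[Theorem~1.2]{IL}.

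First I would invoke Theorem~\ref{oneone} to obtain a bounded continuous $f\colon X\to\R$ and a $\mu$-null Borel set $E$ such that $f$ is one-to-one on $X\setminus E$; fix $M$ with $f(X)\subset[-M,M]$, and let $\mathcal{A}$ denote the $L^p(\mu)$-closure of $\{p\circ f : p\in\R[t]\}$. The task is to show $\mathcal{A}=L^p(\mu)$. Since $\mu$ is a finite regular Borel measure on a metric space, Lusin's theorem (in its metric-space form) implies that $C_b(X)$ is dense in $L^p(\mu)$, so it suffices to approximate an arbitrary $\varphi\in C_b(X)$ in $L^p(\mu)$ by polynomials in $f$.

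Given $\varphi\in C_b(X)$ and $\epsilon>0$, the next step is to use inner regularity to choose a compact set $K\subset X\setminus E$ with $\mu(X\setminus K)<\epsilon$. The crucial observation is that $f|_K$ is a continuous bijection from the compact set $K$ onto the compact set $f(K)\subset[-M,M]$, hence a homeomorphism. Consequently $g_0:=\varphi\circ(f|_K)^{-1}$ is a well-defined continuous function on $f(K)$, which I would extend (via Tietze, followed by truncation at $\|\varphi\|_\infty$) to a continuous $g\colon[-M,M]\to\R$ with $\|g\|_\infty\le\|\varphi\|_\infty$. Stone--Weierstrass then yields a polynomial $p$ with $\sup_{[-M,M]}|p-g|<\epsilon$. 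On $K$ one has $|p\circ f-\varphi|<\epsilon$, while on $X\setminus K$ one has $|p\circ f-\varphi|\le 2\|\varphi\|_\infty+\epsilon$, so
\[
\|p\circ f-\varphi\|_{L^p(\mu)}^p\;\le\;\epsilon^p\,\mu(X)+(2\|\varphi\|_\infty+\epsilon)^p\,\epsilon,
\]
which can be made arbitrarily small by first choosing $\epsilon$ small.

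The only step demanding genuine care is the selection of $K$: it must be compact, disjoint from $E$, and of nearly full measure in $X$ simultaneously. This follows from inner regularity of $\mu$ applied to the set $X\setminus E$ (using $\mu(E)=0$ and $\mu(X)<\infty$). Every other ingredient --- Lusin, Tietze, Stone--Weierstrass, and the final $L^p$ estimate --- is routine, which matches the remark in the excerpt that the proof of \cite[Theorem~1.2]{IL} can be repeated verbatim with Theorem~\ref{oneone} replacing \cite[Theorem~1.10]{IL}.
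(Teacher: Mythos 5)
Your argument is correct and is exactly the route the paper prescribes: it defers to the proof of \cite[Theorem~1.2]{IL}, which is precisely this transfer argument (density of bounded continuous functions in $L^p(\mu)$, inner regularity to get a compact $K$ avoiding the null set on which injectivity may fail, the homeomorphism $f|_K$, Tietze plus Weierstrass on $[-M,M]$, and the final two-region $L^p$ estimate). The only point worth a word is that $E$ should be (contained in) a Borel null set so that inner regularity applies to $X\setminus E$, which is automatic from the construction in Theorem~\ref{oneone}, where the set of injectivity contains a $\sigma$-compact-plus-countable set of full measure.
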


\section{Proof of Theorem~\ref{oneone}}

We begin with several lemmas.  The first of these is probably well-known, and it appears with proof as \cite[Lemma~3.1]{IL}.  Throughout the paper, by \lq\lq a Cantor set" we mean any space that is homeomorphic to the standard middle thirds Cantor set.

\begin{lemma}\label{lemma1}
If $C$ is a Cantor set and $\uc$ is an open cover of $C$, then $C$ can be written as a finite union $C=C_1\cup\ldots\cup C_N$ of disjoint Cantor sets $C_1,\ldots, C_N$ each of which lies in some member of $\uc$.
\end{lemma}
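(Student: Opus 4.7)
The plan is to exploit two standard facts: a compact, totally disconnected, metrizable space has a basis of clopen sets, and (by Brouwer's characterization) a nonempty space that is compact, metrizable, perfect, and totally disconnected is a Cantor set. Both facts transfer well to clopen subsets of $C$.

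First I would fix, for each $x \in C$, a member $U_x \in \uc$ containing $x$. Since $C$ is compact, Hausdorff, and totally disconnected, clopen sets form a basis for its topology, so one can choose a clopen neighborhood $V_x$ of $x$ with $V_x \subset U_x$. Compactness of $C$ then yields finitely many points $\row{x}{m}$ with $C = V_{x_1}\cup\cdots\cup V_{x_m}$. Next I would disjointify in the usual way by setting $C_1 = V_{x_1}$ and $C_i = V_{x_i}\setminus(V_{x_1}\cup\cdots\cup V_{x_{i-1}})$ for $i\geq 2$, and discarding any $C_i$ that turns out to be empty. Each $C_i$ is a Boolean combination of clopen sets, hence itself clopen in $C$, the $C_i$ are pairwise disjoint by construction, and their union is $C$. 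Finally, $C_i\subset V_{x_i}\subset U_{x_i}$ places each piece inside a member of $\uc$.

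It then remains to verify that every nonempty clopen subset $C_i\subset C$ is itself a Cantor set. Being closed in $C$, it is compact and metrizable; being a subspace of the totally disconnected space $C$, it is totally disconnected; and perfection is inherited because any isolated point of $C_i$, together with the openness of $C_i$ in $C$, would produce an isolated point of $C$. Brouwer's topological characterization then identifies $C_i$ as a Cantor set.

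There is really no serious obstacle here: the argument is essentially a refinement-and-disjointification over a clopen basis, with the mild check that perfection descends to nonempty clopen subsets. The only point worth stating carefully is this last inheritance, since the analogous claim would fail for arbitrary closed subsets.
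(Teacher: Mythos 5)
Your proof is correct: the clopen refinement of $\uc$, the disjointification, and the verification via Brouwer's characterization that a nonempty clopen subset of a Cantor set is again a Cantor set (perfection surviving precisely because the subset is open) together give exactly the statement. The paper does not reprove this lemma but cites \cite[Lemma~3.1]{IL} for it, and your argument is the standard one that proof follows, so there is nothing to add.
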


\begin{lemma}\label{topo}
Let $X$ be a topological space and $\mu$ be a $\sigma$-finite positive regular Borel measure on $X$.  Then there exists a countable collection $\{K_n\}$ of disjoint compact sets in $X$ such that $\mu\bigl(X\setminus (\bigcup K_n)\bigr)=0$.
\end{lemma}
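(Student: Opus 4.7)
The plan is to combine $\sigma$-finiteness, inner regularity of $\mu$, and a greedy peeling construction. First, using $\sigma$-finiteness I write $X = \bigsqcup_{j=1}^\infty A_j$ as a disjoint union of Borel sets $A_j$ of finite measure, obtained from any countable cover of $X$ by finite-measure sets by taking successive differences. Because the $A_j$ are already pairwise disjoint, it suffices to produce, inside each $A_j$ separately, a countable disjoint family of compact sets whose union has full measure in $A_j$; the desired collection $\{K_n\}$ is then obtained by re-indexing.

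Fix $j$ and set $B_0 = A_j$. By inner regularity applied to the finite-measure Borel set $B_0$, I choose a compact $K_1 \subset B_0$ with $\mu(B_0 \setminus K_1) \leq \mu(B_0)/2$. Continue inductively: having produced disjoint compact sets $K_1, \ldots, K_n \subset A_j$, set $B_n = A_j \setminus (K_1 \cup \cdots \cup K_n)$ and invoke inner regularity on the finite-measure Borel set $B_n$ to select a compact $K_{n+1} \subset B_n$ with $\mu(B_n \setminus K_{n+1}) \leq \mu(B_n)/2$. The $K_n$ are pairwise disjoint subsets of $A_j$ by construction, and $\mu(B_n) \leq \mu(A_j)/2^n \to 0$, so $\mu\bigl(A_j \setminus \bigcup_n K_n\bigr) = 0$. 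Amalgamating these collections over $j$ yields the lemma.

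The step requiring the most care is the Borel measurability of the residual set $B_n$ at each stage, on which the iterated application of inner regularity rests. This is where the hypothesis that compact subsets of $X$ be Borel enters: it is automatic when $X$ is Hausdorff (in particular when $X$ is a metric space, as in the application to Theorem~\ref{oneone}), since compact sets are then closed, and in any event it is part of the standard framework associated with a regular Borel measure.
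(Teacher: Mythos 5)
Your proof is correct and follows essentially the same route as the paper: disjointify a countable finite-measure cover, then inside each piece inductively peel off disjoint compact sets by inner regularity so that the residual measure tends to zero (the paper uses thresholds $1/j$ where you halve, an immaterial difference). The measurability caveat you flag for the residual sets is also implicitly present in the paper's argument and is harmless in the intended (metric/Hausdorff) setting.
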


\begin{proof}
By hypothesis $X=\bigcup_{n=1}^\infty X_n$ with $\mu(X_n)<\infty$ for each $n$, and without loss of generality the $X_n$ can be taken to be disjoint.  For each fixed $n$, the regularity of $\mu$ enables us to inductively choose disjoint compact sets $X_n^1, X_n^2, \ldots$ contained in $X_n$, such that 
$\mu\bigl(X_n\setminus (X_n^1\cup\ldots\cup X_n^j)\bigr)<1/j$ for each $j=1,2,\ldots\,$ .  Then $\mu\bigl(X_n\setminus (\bigcup_{j=1}^\infty X_n^j)\bigr)=0$.  Hence $\{X_n^j\}_{n,j}$ is a countable collection of disjoint compact sets in $X$ such that $\mu\bigl(X\setminus (\bigcup_{n,j} X_n^j)\bigr)=0$.
\end{proof}

\begin{lemma}\label{opensets}
Let $X$ be a (nonempty) compact metric space without isolated points, and let $\mu$ be a positive regular Borel measure on $X$.  Fix $\varepsilon>0$ and $\delta>0$.  Then for every sufficiently large positive integer $r$, there exists a collection $\{\row Ur\}$ of nonempty open sets in $X$ with disjoint closures such that
$$\mu\bigl(X\setminus (\rowun Ur)\bigr)<\varepsilon$$
and
$${\rm diameter}(U_j)<\delta \quad \hbox{for every\ } j=1,\ldots, r.$$
\end{lemma}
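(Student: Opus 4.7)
The plan is to first produce some specific number $r_0$ of nonempty open sets meeting all the requirements, then increase the count to any $r\geq r_0$ by repeatedly splitting one set. Throughout, the absence of isolated points ensures every nonempty open subset of $X$ is infinite (hence of positive diameter), which provides the room for all manipulations below.

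\textbf{Constructing $r_0$ good sets.} Since $X$ is compact, $\mu(X)<\infty$, and for each $x\in X$ the metric spheres $S(x,\rho)=\{y:d(y,x)=\rho\}$, $\rho>0$, are pairwise disjoint, so only countably many have positive $\mu$-measure. Choose a finite $\delta/4$-net $\{x_1,\ldots,x_m\}$ in $X$ and, for each $i$, pick $\rho_i\in(\delta/4,\delta/2)$ with $\mu(S(x_i,\rho_i))=0$. The balls $B_i=B(x_i,\rho_i)$ then cover $X$, have diameter less than $\delta$, and have $\mu$-null boundaries. Disjointifying, the sets $V_i=B_i\setminus\bigcup_{j<i}\overline{B_j}$ are pairwise disjoint and open, with $X\setminus\bigcup_i V_i\subset\bigcup_j\partial B_j$ of measure zero. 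Discard the empty $V_i$ (say $r_0$ remain). For each $V_i$, define $U_i:=\{x\in V_i:d(x,X\setminus V_i)>\eta_i\}$, choosing $\eta_i>0$ small enough (using continuity of measure and openness of $V_i$) that $U_i$ is nonempty and $\mu(V_i\setminus U_i)<\varepsilon/(2r_0)$. Because $\overline{U_i}\subset V_i$ and each point of the open set $V_i$ has a neighborhood in $V_i$ disjoint from $V_j$ (hence from $U_j$) for $j\neq i$, the closures $\overline{U_i}$ are pairwise disjoint, and $\mu(X\setminus\bigcup U_i)<\varepsilon/2$.

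\textbf{Splitting to reach any $r\geq r_0$.} I will use the following split lemma: any nonempty open $U\subset X$ can, for every $\eta>0$, be written as a disjoint union $U=U'\cup U''\cup N$ with $U',U''$ nonempty open subsets of $U$ having disjoint closures and with $\mu(N)<\eta$. To prove it, pick $p,q\in U$ with $d(p,q)>0$ (possible since $U$ is infinite), and choose $0<\rho<\rho'<d(p,q)$ with $\mu(S(p,\rho))=\mu(S(p,\rho'))=0$ and $\rho'-\rho$ small enough that $\mu(\{\rho\leq d(\cdot,p)\leq\rho'\})<\eta$; the latter is possible because this annulus decreases, as $\rho'\downarrow\rho$, to $S(p,\rho)$, which has $\mu$-measure zero. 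Then set $U'=U\cap B(p,\rho)$ (containing $p$), $U''=U\setminus\overline{B(p,\rho')}$ (containing $q$), and $N$ the remainder. Applying the split lemma $r-r_0$ times, to (say) the largest-measure current set and with $\eta=\varepsilon/2^{k+2}$ at the $k$th step, produces $r$ sets still lying inside the original $U_i$'s (so their diameters remain less than $\delta$ and their closures remain pairwise disjoint), with cumulative measure loss under $\varepsilon/2+\sum_{k\geq 1}\varepsilon/2^{k+2}<\varepsilon$.

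The principal obstacle is reconciling the condition of pairwise disjoint closures with the requirement that the sets cover most of the measure: the natural ball-based disjointification yields disjoint but not closure-disjoint open sets, which forces the shrinking step, while the splitting step is what lets $r$ range over all sufficiently large integers rather than one particular value dictated by the initial net.
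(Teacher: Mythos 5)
Your proof is correct, and it shares the paper's two-stage architecture: first manufacture some number $r_0$ of small nonempty open sets with disjoint closures capturing all but a fraction of $\varepsilon$ of the measure, then show the count can be pushed to every larger $r$ by splitting one set at a time. The implementations of both stages differ, however. For the first stage the paper disjointifies a finite cover by small balls into Borel pieces $E_j$, uses inner regularity to choose compact $K_j\subset E_j$ with $\mu(E_j\setminus K_j)<\varepsilon/s$, and then separates these disjoint compacta by open neighborhoods of diameter less than $\delta$ with disjoint closures; you instead choose the ball radii so that the bounding spheres are $\mu$-null, disjointify by deleting closed balls (losing only a null set), and then shrink each piece $V_i$ via the distance-to-complement function and continuity of measure, so that $\overline{U_i}\subset V_i$ forces disjoint closures. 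For the second stage the paper separates a point $p\in U_r$ from a compact subset of $U_r\setminus\{p\}$ of nearly full measure, recycling the leftover slack $\gamma=\varepsilon-\mu\bigl(X\setminus(U_1\cup\ldots\cup U_r)\bigr)$, whereas you cut along a thin annulus around a null sphere and budget the losses by a geometric series $\sum_k \varepsilon/2^{k+2}$. A by-product of your route is that it never appeals to regularity of $\mu$, only to its finiteness on the compact space $X$ (a convention both proofs rely on, the paper implicitly when it demands $\mu(E_j\setminus K_j)$ small); since finite Borel measures on compact metric spaces are automatically regular this is a gain in self-containedness rather than generality. The key verifications in your write-up (the balls cover because $\rho_i>\delta/4$, the uncovered set lies in the union of the null spheres, $\overline{U'}\subset\{d(\cdot,p)\le\rho\}$ and $\overline{U''}\subset\{d(\cdot,p)\ge\rho'\}$ are disjoint, the split pieces contain $p$ and $q$ and stay inside the original sets, and the cumulative measure loss stays below $\varepsilon$) all check out.
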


\begin{proof}
Since $X$ is a compact metric space, $X$ is totally bounded.  Thus $X$ can be covered by finitely many balls $\row As$ of diameters less than $\delta$.  Set $E_1=A_1$ and $E_j=A_j\setminus (\rowun A{{j-1}})$ \vadjust{\kern 2pt} for each $j=2,\ldots, s$.  Then the $E_j$ are disjoint and $\bigcup_{j=1}^s E_j=X$.  By the regularity of $\mu$, for each $j=1,\ldots, s$, we can choose a compact set $K_j$ contained in $E_j$ such that $\mu(E_j\setminus K_j)<\varepsilon/s$.  
Then the sets $\row Ks$ are disjoint and have diameters less than $\delta$.  Hence we can choose open neighborhoods $\row Us$ of $\row Ks$, respectively, so that the closures of the $U_j$ are disjoint and 
$${\rm diameter}(U_j)<\delta \quad \hbox{for every\ } j=1,\ldots, s.$$  
Then also
$$\mu\bigl(X\setminus (\rowun Us)\bigr)\leq \mu\bigl(X\setminus (\rowun Ks)\bigr)=
\sum_{j=1}^s \mu(E_j\setminus K_j)<\varepsilon.$$

The above argument establishes that the desired nonempty open sets can be obtained for {\em some\/} positive integer $r\leq s$.  To show that $r$ can be taken arbitrarily large, it suffices by induction, to show that $r$ can be increased by 1.  To this end, suppose that $\row Ur$ are as in the statement of the lemma.  Let $\gamma=\varepsilon-\mu\bigl(X\setminus (\rowun Ur)\bigr)>0$, and choose a point $p\in U_r$.  Because $X$ has no isolated points and $\mu$ is regular, there is a nonempty compact set $K$ in $U_r\setminus\{p\}$ such that $\mu\bigl((U_r\setminus\{p\})\setminus K\bigr)<\gamma$.  Choose open neighborhoods $U'_r$ and $U'_{r+1}$ of $\{p\}$ and $K$, respectively, contained in $U_r$ with disjoint closures.  Then $U_1,\ldots, U_{r-1}, U'_r, U'_{r+1}$ is a collection of $r+1$ nonempty open sets with the required properties.
\end{proof}

\begin{lemma}\label{largecantor}
Given a (nonempty) compact metric space $X$ without isolated points, a positive regular Borel measure $\mu$ on $X$, and $\varepsilon>0$, there exists a Cantor set $C$ in $X$ such that $\mu(X\setminus C)<\varepsilon$.
\end{lemma}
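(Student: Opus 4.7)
The plan is to build the Cantor set $C$ as a nested intersection $\bigcap_n F_n$, where each $F_n$ is a finite disjoint union of compact perfect subsets of $X$ having diameter less than $1/n$, obtained by iterated application of Lemma~\ref{opensets}.

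The observation that makes the iteration go through is the following: if $K$ is any compact metric space with no isolated points and $V\subset K$ is a nonempty relatively open subset, then the closure $\overline{V}$ (taken in $K$) also has no isolated points. This is a direct verification, since a point of $V$ has arbitrarily close neighbors in $V$ because $V$ is open in the perfect space $K$, while a point of $\overline{V}\setminus V$ is by definition a limit of points of $V\subset\overline{V}$.

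Fix positive numbers $\varepsilon_n$ with $\sum\varepsilon_n<\varepsilon$. Inductively construct a finite collection $\mathcal{K}_n$ of nonempty, pairwise disjoint, compact perfect subsets of $X$ such that: (i) every member of $\mathcal{K}_n$ has diameter less than $1/n$; (ii) every member of $\mathcal{K}_n$ is contained in some parent in $\mathcal{K}_{n-1}$, and every parent has at least two children; (iii) $\mu\bigl(X\setminus\bigcup\mathcal{K}_n\bigr)<\varepsilon_1+\cdots+\varepsilon_n$. The base case applies Lemma~\ref{opensets} to $X$, takes closures of the resulting open sets, and keeps at least two of them. The inductive step applies Lemma~\ref{opensets} separately inside each $K\in\mathcal{K}_n$---which is legitimate because $K$ is compact metric and perfect by the observation---with diameter parameter $1/(n+1)$, measure tolerance $\varepsilon_{n+1}/|\mathcal{K}_n|$, and at least two output sets per $K$. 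The closures in $K$ of these open sets, collected over all $K\in\mathcal{K}_n$, form $\mathcal{K}_{n+1}$, and the required properties are routine to verify.

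Setting $C=\bigcap_n\bigcup\mathcal{K}_n$, standard arguments show that $C$ is nonempty (finite intersection property), compact and metrizable (as a closed subset of $X$), totally disconnected (diameters of the pieces shrink to $0$, so any two distinct points of $C$ eventually lie in different pieces), and perfect (the branching condition produces, for any $p\in C$, other points of $C$ within distance $1/n$ of $p$ for each $n$); hence $C$ is a Cantor set. Continuity of measure along the decreasing sequence $\bigl\{\bigcup\mathcal{K}_n\bigr\}$ combined with condition~(iii) yields $\mu(X\setminus C)\leq\sum\varepsilon_n<\varepsilon$. The main obstacle in the argument is exactly the perfectness issue addressed by the observation: without it, the recursive invocation of Lemma~\ref{opensets} inside the sets $K\in\mathcal{K}_n$ would not be justified.
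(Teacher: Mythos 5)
Your construction is correct, and its skeleton is the same as the paper's: iterate Lemma~\ref{opensets} inside each piece of the previous generation to get finitely many disjoint compact pieces of diameter less than $1/n$ with controlled measure loss, and take the intersection. Two points differ in the execution. First, you finish by verifying Brouwer's characterization of the Cantor set (nonempty, compact, metrizable, totally disconnected, perfect) --- for which you need the explicit branching condition that each parent has at least two children, available since Lemma~\ref{opensets} allows $r$ to be taken arbitrarily large --- whereas the paper avoids quoting that classical theorem by indexing the pieces and exhibiting an explicit homeomorphism of $C$ with the product $\prod_{k}\{1,\ldots,r_k\}$ (a bijection because pieces at each level are disjoint, continuous because diameters shrink, hence a homeomorphism by compactness). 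So your route is slightly shorter but leans on Brouwer's theorem as an external ingredient, while the paper's is self-contained. Second, you isolate and prove the observation that the closure of a nonempty open subset of a perfect compact metric space is again perfect; the paper uses exactly this fact (asserting that each $\overline U_{j_1,\ldots,j_s}$ has no isolated points) but does not spell out the verification, so making it explicit is a genuine, if small, improvement. Your measure bookkeeping (tolerance $\varepsilon_{n+1}/|\mathcal{K}_n|$ per parent, summed via countable additivity/continuity from above, the latter legitimate since $\mu(X)<\infty$ on the compact space $X$) is also clean and correct.
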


A result close to Lemma~\ref{largecantor} appears in the paper \cite{G} by Bernard Gelbaum.  (The author would like to thank Bo Li for pointing this out.)  Lemma~\ref{largecantor} is more general than the result in \cite{G}, since in \cite{G} the measure is required to be nonatomic and there is no such requirement in Lemma~\ref{largecantor}.  The author was surprised to find that the proof in \cite{G} is very different from the one given here.

\begin{proof}
By the preceding lemma, there are nonempty open sets $\row U{{r_1}}$ (for some $r_1$) with disjoint closures such that 
$$\mu\bigl(X\setminus(\rowun U{{r_1}})\bigr)<\varepsilon/2$$
and 
$${\rm diameter}(U_{j_1})<1 \quad \hbox{for every\ } j_1=1,\ldots, r_1.$$
Each $\oU_{j_1}$ is a compact set without isolated points, so we can apply the preceding lemma to each $\oU_{j_1}$ to obtain nonempty relatively open subsets $V_{{j_1},{j_2}}$ for $j_1=\range 1$ and $j_2=\range 2$ (for some $r_2$) with disjoint closures such that
\vadjust{\kern 6pt}
\item{\rm(i)} $V_{{j_1},{j_2}}\subset \oU_{j_1}$,
\item{\rm(ii)} $\mu\bigl(\oU_{j_1} \setminus (V_{{j_1},1}\cup\ldots\cup V_{{j_1},{r_2}})\bigr)< \displaystyle\frac{\varepsilon}{2^2\, r_1}$, and
\item{\rm(iii)} ${\rm diameter}(V_{{j_1},{j_2}})<1/2$.
\vadjust{\kern 6pt}\hfil\break
Setting $U_{{j_1},{j_2}}=V_{{j_1},{j_2}}\cap U_{j_1}$, we obtain nonempty open subsets of $X$ such that
\vadjust{\kern 6pt}
\item{\rm(i$'$)} $U_{{j_1},{j_2}}\subset U_{j_1}$,
\item{\rm(ii$'$)} $\mu\bigl(U_{j_1} \setminus (U_{{j_1},1}\cup\ldots\cup U_{{j_1},{r_2}})\bigr)< \displaystyle\frac{\varepsilon}{2^2\, r_1}$, and
\item{\rm(iii$'$)} ${\rm diameter}(U_{{j_1},{j_2}})<1/2$.
\vadjust{\kern 6pt}\hfil\break
In general, assume that we have chosen, for each $s=1,\ldots, k$, nonempty open subsets $U_{\row js}$ of $X$ for each $j_1=\range 1;\ldots; j_s=\range s$ (for some $\row rs$) with disjoint closures such that
\vadjust{\kern 6pt}
\item{\rm(i$''$)} $U_{\row js}\subset U_{\row j{{s-1}}}$,
\item{\rm(ii$''$)} $\mu\bigl(U_{\row j{{s-1}}} \setminus (U_{\row j{{s-1}},1}
\cup\ldots\cup U_{\row j{{s-1}}, r_s})\bigr)< \displaystyle\frac{\varepsilon}{2^s\, r_{s-1}}$, and
\item{\rm(iii$''$)} ${\rm diameter}(U_{\row js})<1/s$.
\vadjust{\kern 6pt}\hfil\break
Each $\oU_{\row jk}$ is a compact set without isolated points to which we can apply the procedure above to obtain open sets $U_{\row j{{k+1}}}$ for each $j_1=\range 1;\ldots; j_{k+1}=\range {{k+1}}$ (for some $r_{k+1}$) with disjoint closures such that conditions (i$''$)--(iii$''$) hold with $s$ replaced by $k+1$.  Thus by induction the construction can be continued.  

Now consider the sets $K_s=\bigcup_{{j_1}=1}^{r_1}\cdots\bigcup_{{j_s}=1}^{r_s} \oU_{\row js}$.  These are\break nonempty compact sets such that $K_1\supset K_2\supset \cdots$, so their intersection $C=\bigcap_{s=1}^\infty K_s$ is nonempty.  Moreover, one easily verifies that $\mu(X\setminus C)<\varepsilon$.  Finally we claim that $C$ is a Cantor set.  To verify this, note that for each sequence $(j_1, j_2, \ldots)\in \prod_{k=1}^\infty \{\range k\}$ we have
$$\oU_{j_1}\supset \oU_{{j_1},{j_2}}\supset \oU_{{j_1},{j_2},{j_3}}\supset\cdots,$$
so the intersection of these sets is nonempty, and because the diameters of these sets go to zero, the intersection consists of a single point.  Thus there is a well-defined map
$$\textstyle F: \prod\limits_{k=1}^\infty \{\range k\} \rightarrow C$$
sending the sequence $(j_1, j_2, \ldots)$ to the point in the intersection.  One easily  verifies that $F$ is a bijection by using that, for each fixed $s$, the sets $\oU_{\row js}$ (as $\row js$ vary) are disjoint.  One easily verifies that $F$ is continuous using that the diameters of the sets $\oU_{\row js}$ go to zero as $s\rightarrow \infty$.  Hence, by compactness, $F$ is a homeomorphism.  Thus since $\prod_{k=1}^\infty \{\range k\}$ is a Cantor set, so is $C$.
\end{proof}

\begin{lemma}\label{ncantor}
Given a (nonempty) compact metric space $X$ without isolated points and a positive regular Borel measure $\mu$ on $X$, there exists an at most countable collection $\{C_n\}$ of disjoint Cantor sets in $X$ such that $\mu\bigl(X\setminus(\bigcup C_n)\bigr)=0$.
\end{lemma}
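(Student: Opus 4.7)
The plan is to construct the $C_n$ recursively by iterating Lemma~\ref{largecantor}, using an ``inflation'' step to deal with the fact that after removing finitely many Cantor sets from $X$, the residual is an open noncompact set rather than a compact set without isolated points.

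Note that $\mu(X)<\infty$ by compactness and regularity. Let $E_n=X\setminus(C_1\cup\cdots\cup C_n)$ and $M_n=\mu(E_n)$. Given disjoint Cantor sets $C_1,\ldots,C_n$, I will produce a Cantor set $C_{n+1}$ disjoint from them and with $M_{n+1}<M_n/2$; iterating then yields $M_n\to 0$, and since $X\setminus\bigcup_n C_n=\bigcap_n E_n$, this gives the desired $\mu(X\setminus\bigcup_n C_n)=0$. To construct $C_{n+1}$, I would first use inner regularity to choose a compact set $K_n\subset E_n$ with $\mu(E_n\setminus K_n)<M_n/4$; since $\bigcup_{k\le n}C_k$ is closed and disjoint from $K_n$, I can pick $r_n>0$ strictly less than $d(K_n,\bigcup_{k\le n}C_k)$ and set $K_n'=\overline{\{x\in X:d(x,K_n)<r_n\}}$, which is a compact subset of $E_n$ containing $K_n$. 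The crucial point is that $K_n'$ has no isolated points: any point of $K_n'$ either lies in the open set $U_n=\{x:d(x,K_n)<r_n\}$, in which case some open ball about it is contained in $U_n\subset K_n'$ and, because $X$ has no isolated points, contains another point of $K_n'$; or else it lies in $\overline{U_n}\setminus U_n$ and is already a limit of points of $U_n$. Applying Lemma~\ref{largecantor} to $K_n'$ with parameter $\varepsilon=M_n/4$ yields a Cantor set $C_{n+1}\subset K_n'$ with $\mu(K_n'\setminus C_{n+1})<M_n/4$, so $\mu(E_n\setminus C_{n+1})\le\mu(E_n\setminus K_n')+\mu(K_n'\setminus C_{n+1})<M_n/2$, as required.

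The main obstacle is precisely the one the inflation step addresses: a compact subset of $E_n$ produced directly by inner regularity may have isolated points, so Lemma~\ref{largecantor} cannot be applied to it and the recursion would stall. Thickening $K_n$ inside the open set $E_n$ exploits the absence of isolated points in the ambient $X$ to produce a compact set that inherits the no-isolated-points property. No separate treatment of atoms of $\mu$ is needed, since the quantitative bound on $\mu(E_n\setminus C_{n+1})$ captures any atomic mass automatically as $n$ grows.
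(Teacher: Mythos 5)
Your proposal is correct and is essentially the paper's own argument: both proceed by induction, repeatedly applying Lemma~\ref{largecantor} to drive the residual measure to zero geometrically, and both recover the no-isolated-points hypothesis by passing to the closure of an open set disjoint from the previously chosen Cantor sets. The only difference is in the details of that step: the paper thickens $C_1\cup\cdots\cup C_k$ via regularity and takes $Y=\overline{X\setminus\overline V}$, whereas you thicken an inner-regular compact core of the residual open set by a small metric neighborhood and take its closure --- the same trick applied from the other side.
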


\begin{proof}
We construct the sets $C_n$ inductively.  By the preceding lemma, there exists a Cantor set $C_1$ in $X$ such that $\mu(X\setminus C_1)<1$.  In general, assume that disjoint Cantor sets $\row Ck$ have been chosen such that $\mu\bigl(X\setminus (\rowun Ck)\bigr)<1/2^k$.  If in fact $\mu\bigl(X\setminus (\rowun Ck)\bigr)=0$, then we are done.  Otherwise, by the regularity of $\mu$, there is an open neighborhood $U\subsetneq X$ of $\rowun Ck$ such that $\mu\bigl(U\setminus (\rowun Ck)\bigr)<1/2^{k+2}$.  Now choose an open neighborhood $V$ of $\rowun Ck$ such that $\overline V\subset U$.  Let $Y=\overline{X\setminus \overline V}$.  Then $Y$ is a nonempty compact set disjoint from $\rowun Ck$ and $X=U\cup Y$.  Because $Y$ is the closure of the open set $X\setminus \overline V$, we see that $Y$ has no isolated points.  Therefore, the preceding lemma gives that there is a Cantor set $C_{k+1}$ in $Y$ such that $\mu(Y\setminus C_{k+1})<1/2^{k+2}$.  Since $C_{k+1}\subset Y$, we know that $C_{k+1}$ is disjoint from the sets $\row Ck$.  Since $X=U\cup Y$ we have
\begin{eqnarray*}
\mu\bigl(X\setminus (\rowun C{{k+1}})\bigr)&\leq &
\mu\bigl(U\setminus (\rowun Ck)\bigr) + \mu(Y\setminus C_{k+1}) \\
& < & 1/2^{k+2} + 1/2^{k+2} = 1/2^{k+1}.
\end{eqnarray*}
Thus by induction we obtain a sequence of disjoint Cantor sets $C_1,C_2,\ldots$, such that $\mu\bigl(X\setminus (\rowun Cj)\bigr)<1/2^j$ for every $j$.  Hence $\mu(X\setminus \bigcup_{n=1}^\infty C_n)=0$.
\end{proof}

\begin{lemma}\label{lemma3}
Given a metric space $X$ and a $\sigma$-finite positive regular Borel measure $\mu$ on $X$, there exist an at most countable collection $\{C_n\}$ of disjoint Cantor sets in $X$ and an at most countable set $S$ in $X$ disjoint from each $C_n$ such that $\mu\bigl(X \setminus \bigl((\bigcup C_n)\cup S\bigl)\bigl)=0$.
\end{lemma}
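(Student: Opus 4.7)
The plan is to bootstrap from Lemma~\ref{ncantor} (which handles the compact, perfect case) to the general case by first reducing to a countable family of compact subsets via Lemma~\ref{topo}, and then stripping the isolated points off each compact piece.

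First I would invoke Lemma~\ref{topo} to obtain a countable collection $\{K_n\}$ of disjoint compact subsets of $X$ with $\mu\bigl(X \setminus \bigcup_n K_n\bigr) = 0$. Each $K_n$ is a compact metric space in its own right, so one is tempted to apply Lemma~\ref{ncantor} to it directly, but the $K_n$ produced by Lemma~\ref{topo} can certainly have isolated points.

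To bridge this gap I would perform a Cantor--Bendixson decomposition of each $K_n$, writing $K_n = P_n \cup S_n$, where $P_n$ is the perfect kernel of $K_n$ (the largest closed subset without isolated points, obtained by iterating the derived-set operation) and $S_n = K_n \setminus P_n$ is at most countable, as follows from second countability of the compact metric space $K_n$. When $P_n$ is nonempty, the restriction of $\mu$ to $P_n$ is a positive regular Borel measure on a compact metric space without isolated points, so Lemma~\ref{ncantor} supplies an at most countable collection $\{C_{n,m}\}_m$ of disjoint Cantor sets in $P_n$ with $\mu\bigl(P_n \setminus \bigcup_m C_{n,m}\bigr) = 0$.

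Finally I would take $\{C_{n,m}\}_{n,m}$ as the required countable family of disjoint Cantor sets and $S = \bigcup_n S_n$ as the at most countable exceptional set. Disjointness of the Cantor sets across different values of $n$ is inherited from disjointness of the $K_n$, and $S$ is disjoint from every $C_{n,m}$ because $C_{n,m} \subset P_n = K_n \setminus S_n$ while also being disjoint from each $S_{n'} \subset K_{n'}$ for $n' \neq n$. The null-complement statement then follows by combining the three null sets $X \setminus \bigcup_n K_n$, and $P_n \setminus \bigcup_m C_{n,m}$ for each $n$. I do not foresee a genuine obstacle; the only substantive ingredient beyond the previous lemmas is the Cantor--Bendixson step used to dispose of isolated points before invoking Lemma~\ref{ncantor}.
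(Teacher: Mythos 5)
Your proposal is correct and follows essentially the same route as the paper: Lemma~\ref{topo} to get disjoint compact sets, the Cantor--Bendixson decomposition of each $K_n$ into a perfect kernel $P_n$ and an at most countable set $S_n$, then Lemma~\ref{ncantor} applied to each nonempty $P_n$, with $S=\bigcup_n S_n$. No gaps.
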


\begin{proof}
By Lemma~\ref{topo} there exists a countable collection $\{K_n\}$ of disjoint compact sets in $X$ such that $\mu\bigl(X\setminus (\bigcup K_n)\bigr)=0$.  By the 
Cantor-Bendixson theorem \cite[Theorem~2A.1]{Mos}, each of the compact sets $K_n$ is a disjoint union of a perfect set $P_n$ and an at most countable set $S_n$.  By Lemma~\ref{ncantor} each nonempty perfect set $P_n$ contains an at most countable collection $\{K_n^j\}_j$ of disjoint Cantor sets such that $\mu(P_n\setminus \bigl(\bigcup_j K_n^j)\bigr)=0$.  Now $\{K_n^j\}_{n,j}$ is an at most countable collection of disjoint Cantor sets, the set $S=\bigcup S_n$ is at most countable and disjoint from each $K_n^j$, and $\mu\bigl(X\setminus ((\bigcup_{n,j} K_n^j)\cup S)\bigr)=0$.
\end{proof}

With these preliminaries, we can now prove Theorem~\ref{oneone} by essentially repeating the proof of \cite[Theorem~1.10]{IL}.  Minor changes are required on account of the (possible) presence of the at most countable set $S$ in Lemma~\ref{lemma3}.  The proof will be carried out as if the collection $\{C_n\}$ and the set $S$ in Lemma~\ref{lemma3} are both countably infinite.  If either is actually finite, then in the inductive procedure below one simply ceases to carry out the part of the construction that no longer makes sense once the collection  $\{C_n\}$, or the set $S$, has been exhausted.  If both the collection $\{C_n\}$ and the set $S$ are finite, then the procedure terminates, but in that case the result is rather trivial, so the construction below is not really needed then.

\begin{proof}
[Proof of Theorem~\ref{oneone}]
By Lemma~\ref{lemma3} there exist in $X$ disjoint sets $S$ and $C_1, C_2, \ldots$ such that $S$ is at most countable, each $C_j$ is a Cantor set, and $\mu\bigl(X \setminus \bigl((\bigcup C_j)\cup S\bigl)\bigl)=0$.  Let the points of $S$ be denoted by $x_1, x_2, \ldots\,$ .   We will construct a sequence $(f_n)_{n=1}^\infty$ of continuous functions from $X$ into $[0,1]$ such that for each $n$
\item{\rm(i)} $f_n$ is one-to-one on $\rowun{C}{n}\cup \{x_1,\ldots, x_n\}$,
\item{\rm(ii)} $f_{n+1}$ agrees with $f_n$ on $\rowun{C}{n}\cup \{x_1,\ldots, x_n\}$, and
\item{\rm(iii)} $\|f_{n+1}-f_{n}\|_\infty\leq 1/2^n$.\hfil\break
Suppose for the moment that such a sequence of functions has been constructed.  Then on account of condition (iii), the sequence $(f_n)$ converges uniformly to a continuous limit function $f$.  Due to condition (ii), $f_m$ agrees with $f_n$ on $\rowun{C}{n}\cup \{x_1,\ldots, x_n\}$ for all $m\geq n$, and hence the limit function $f$ also agrees with $f_n$ on $\rowun{C}{n}\cup \{x_1,\ldots, x_n\}$.  Now given distinct points $a$ and $b$ in $(\bigcup_{j=1}^\infty C_j)\cup S$, choose $N$ such that both $a$ and $b$ lie in $\rowun CN\cup \{x_1,\ldots, x_N\}$.  Then $f(a)=f_N(a)\neq f_N(b)=f(b)$.  Hence $f$ is one-to-one on $(\bigcup C_j)\cup S$.
Thus it suffices to construct a sequence of functions satisfying conditions (i)--(iii).

We will construct the sequence of functions $f_n$ by induction.  For the purpose of carrying out the induction we will also require the additional condition that for each $n$
\item{\rm(iv)} $\{\, f_n(C_1),\ldots, f_n(C_n)\, \}$ is a collection of disjoint Cantor sets in $[0,1]$.

We begin by defining $f_1$.  Choose a Cantor set $\widetilde C_1$ in $[0,1]$ and a point $y_1$ in $[0,1]\setminus \widetilde C_1$.  Choose a homeomorphism $g_1$ of $C_1$ onto $\widetilde C_1$.  By the Tietze extension theorem, there is an extension of $g_1$ to a continuous function of $X$ into $[0,1]$ that maps $x_1$ to $y_1$.  Let $f_1$ be the extension.

Now to carry out the induction, assume that functions $\row fk$ have been defined so that conditions (i)--(iv) hold for those values of $n$ for which they are meaningful.  We wish to define $f_{k+1}$.  By the continuity of $f_k$, there is an 
open cover $\uc$ of $C_{k+1}$ such that for each member $U$ of $\uc$ we 
have that $f_k(U)$ is contained in an interval of length $1/2^k$.   By Lemma~\ref{lemma1} we can write $C_{k+1}$ as a finite union $C_{k+1}=C_{k+1}^1\cup\ldots\cup C_{k+1}^N$ of disjoint Cantor sets $C_{k+1}^1,\ldots, C_{k+1}^N$ each of which is contained in some member of $\uc$.  Then for each  $j=1,\ldots, N$, the set $f_k(C_{k+1}^j)$ is contained in an interval $I_{k+1}^j\subset [0,1]$ of length $1/2^k$.  Since $f_k(C_1),\ldots, f_k(C_k)$ are disjoint Cantor sets, their union is also a Cantor set and in particular has empty interior in $[0,1]$.  Consequently, we can choose disjoint Cantor sets 
$\widetilde C_{k+1}^1,\ldots, \widetilde C_{k+1}^N$  with $\widetilde C_{k+1}^j$ contained in $I_{k+1}^j\setminus \bigl(f_k(C_1)\cup\ldots\cup f_k(C_k)\cup \{f_k(x_1),\ldots, f_k(x_k)\}\bigr)$ for each $j$, and we can choose a point $y_{k+1}$ in $[0,1]\setminus \bigl(f_k(C_1)\cup\ldots\cup f_k(C_k)\cup \{f_k(x_1),\ldots, f_k(x_k)\}\cup \widetilde C_{k+1}^1\cup\ldots\cup \widetilde C_{k+1}^N\bigr)$ with $|f_k(x_{k+1})-y_{k+1}|<1/2^k$.  Choose a homeomorphism $g_{k+1}^j$ of $C_{k+1}^j$ onto $\widetilde C_{k+1}^j$ for each $j$, and then define $g_{k+1}$ on $\rowun C{{k+1}}  \cup \{\row x{{k+1}}\}$ by
$$g_{k+1}(x) =  
\begin{cases} f_k(x)  &  {\rm if \ \ } x \in \rowun Ck\cup\{\row xk\} \cr 
g_{k+1}^j(x)  & {\rm if \ \ } x \in C_{k+1}^j\qquad (j=1,\ldots, N)\cr
y_{k+1} & {\rm if \ \ } x=x_{k+1}
\end{cases} $$ 
Then $g_{k+1}$ is a homeomorphism of $\rowun C{{k+1}}\cup \{\row x{{k+1}}\}$ onto 
$f(C_1)\cup\ldots\cup f(C_k) \cup \widetilde C_{k+1}^1\cup\ldots\cup \widetilde C_{k+1}^N\cup \{\row y{{k+1}}\}$ taking $C_{k+1}$ onto $\widetilde C_{k+1}^1\cup\ldots\cup \widetilde C_{k+1}^N$.  Note that 
\smallskip
\[
\sup
\bigl\{|f_k(x)-g_{k+1}(x)|: {x\in {\rowun C{{k+1}}\cup\{\row x{{k+1}}\}}}\bigr\}\leq 1/2^k
\smallskip
\]
since for each $j$ both $f_k(C_{k+1}^j)$ and $g_{k+1}(C_{k+1}^j)$ are contained in the interval $I_{k+1}^j$ of length $1/2^k$ and $|f_k(x_{k+1})-y_{k+1}|<1/2^k$.  By the Tietze extension theorem, there is a continuous function $h_{k+1}$ on $X$ that agrees with $f_k-g_{k+1}$ on $\rowun C{{k+1}}\cup \{\row x{{k+1}}\}$ and satisfies
\[ \|h_{k+1}\|_{\infty}\leq 1/2^k.\]
Define a function $f_{k+1}$ on $X$ by
$$f_{k+1}(x) =  
\begin{cases} f_k(x)-h_{k+1}(x)  &  {\rm if \ \ } 0\leq f_k(x)-h_{k+1}(x) \leq 1\cr 
0  & {\rm if \ \ }  f_k(x)-h_{k+1}(x)\leq 0\cr
1 & {\rm if \ \ }  f_k(x)-h_{k+1}(x)\geq 1
\end{cases} $$ 
Then $f_{k+1}$ is a continuous functions from $X$ into $[0,1]$ such that
$f_{k+1}=g_{k+1}$ on $\rowun C{{k+1}}\cup \{\row x{{k+1}}\}$ and $\|f_{k+1}-f_k\|_\infty\leq 1/2^k$.  It follows that $\row f{{k+1}}$ satisfy the required conditions (i)--(iv) for those values of $n$ for which the conditions are meaningful.  Therefore, by induction we obtain the desired sequence $(f_n)$, and the proof is complete.
\end{proof}

{\bf Acknowledgement\/}: The author thanks the referee for reading the paper especially carefully and making several helpful comments that led to improvements.

\end{document}